\newtheorem{thm}{Theorem}[section]
\newtheorem{cor}[thm]{Corollary}
\newtheorem{lem}[thm]{Lemma}
\newtheorem{pro}[thm]{Proposition}
\newtheorem{defi}[thm]{Definition}
\newtheorem{rem}[thm]{Remark}
\newenvironment{proof}{\noindent {\textbf{Proof.}} \sf}
\def\qed{\hfill $\diamond$ \bigskip}
\def\B{{\mathcal B}}
\def\C{{\mathcal C}}
\def\D{{\mathcal D}}
\def\F{{\mathcal F}}
\def\lim{\mathop{\rm lim}\nolimits}
\def\aut{\mathsf{Aut}}
\def\st{\mathsf{St}}
\def\mor{\mathsf{Mor}}
\def\gal{\mathsf{Gal}}
\begin{document}

\sf

\title{Full and convex linear subcategories are incompressible}
\author{Claude Cibils, Maria Julia Redondo and Andrea Solotar
\thanks{\footnotesize This work has been supported by the projects  UBACYTX212 and 475, PIP-CONICET 112- 200801-00487, PICT-2007-02182 and MATHAMSUD-NOCOMALRET.
The second and third authors are  research members of
CONICET (Argentina).}}

\date{}

\maketitle

\begin{abstract}
Consider the  intrinsic fundamental group \textit{\`{a} la} Grothendieck of a linear category, introduced in \cite{CRS} and \cite{CRS2} using connected gradings. In this article we prove that any full convex subcategory is incompressible, in the sense that the group map between the corresponding  fundamental groups is injective.  We start by proving the functoriality of the  intrinsic fundamental group with respect to full subcategories, based on the study of the restriction of connected gradings.
\end{abstract}

\noindent 2010 MSC: 16W50, 18G55, 55Q05, 16B50

\section{\sf Introduction}

In two recent papers \cite{CRS,CRS2} we have considered a new intrinsic fundamental group attached to a linear category. In \cite{CRS2} we have made several explicit computations: for instance the fundamental group of $kC_p$ where $C_p$ is a group of prime order $p$ and $k$ is a field of characteristic $p$ is the direct product of $C_p$ with the infinite cyclic group.

We have obtained this group using methods inspired by the definition of the fundamental group in other mathematical contexts. We briefly recall the definition of the intrinsic fundamental group as the automorphism group of a fibre functor.

Previously a non canonical fundamental group has been introduced by R.
Mar\-t\'i\-nez-Villa and J.A. de la Pe\~{n}a in \cite{MP} and K. Bongartz and P. Gabriel in \cite{boga} and \cite{ga}. This group  varies considerably according to the presentation of the linear category, see for instance \cite{asde,buca}. See also  \cite{le,le2} as a first approach to solve this variability problem.

The main tool we use are connected gradings of linear categories. The fundamental group that we consider is a group which is derived from all the groups grading the linear category in a connected way. More precisely each connected grading provides a Galois covering through the smash product, see \cite{CM}. Considering the category of Galois coverings of this type we define the intrinsic fundamental group as the automorphism group of the fibre functor over a chosen object.

One of the main purposes of this paper is to prove that full and convex subcategories are incompressible, in the sense used in algebraic topology where a subspace is called incompressible if the group map between the corresponding fundamental groups is injective.

Another main purpose is to prove that the intrinsic fundamental group is functorial with respect to full subcategories, answering in this way a question by Alain Brugui\`{e}res. Note that this is not automatic from the definition of the fundamental group. We provide a description of elements of the intrinsic fundamental group as compatible families of elements lying in each group which grades the linear category in a connected way. The other main ingredient for proving the functoriality is the connected component of the base object for a restricted grading, and the associated connected grading considered in \cite{CM} depending on some choices which we prove to be irrelevant at the intrinsic fundamental group level.

Finally we consider convex subcategories. Recall that a linear subcategory of a linear category is convex if morphisms of the subcategory can only be factorized through morphisms in the subcategory. Our results enables to prove that full and convex subcategories are incompressible.

We thank the referee for useful comments.

\section{\sf Coverings and elements of the intrinsic fundamental group}

In this section we recall some definitions and results from \cite{CRS} and \cite{CRS2} that we will use throughout this paper.
A main purpose of this article is to prove that the intrinsic fundamental group of a $k$-category is functorial with respect to
full subcategories. In order to do so we will need a concrete interpretation of elements of this group that we provide below.

Let $k$ be a commutative ring. A $k$-category is a small category
$\B$ with set of objects $B_0$ such that each morphism set ${}_y\B_x$ from
$x\in\B_0 $ to $y\in\B_0$ is a $k$-module, the composition of
morphisms is $k$-bilinear and the identity at each object is central
in its endomorphism ring.

\begin{defi} The star $\st_{b}\B$ of a $k$-category $\B$ at an
object $b$ is the direct sum of all $k$-modules of morphisms with source or target $b$:
\[\st_{b}\B = \left(\bigoplus_{y\in\B_0} \ {}_y\B{_{b}}\right)\ \oplus
\ \left(\bigoplus_{y\in\B_0} \ {}_{b}\B{_y}\right)\]

\end{defi}

\begin{defi}\label{covering}
 Let $\C$ and $\B$ be $k$-categories. A $k$-functor
$F:\C\rightarrow\B$ is a covering of $\B$ if it is surjective on
objects and if $F$ induces $k$-isomorphisms between all
corresponding stars. More precisely, for each $b\in\B_0$ and each
$x$ in the non-empty fibre $F^{-1}(b)$, the map
\[F_{b}^x:\st_x\C\longrightarrow\st_{b}\B.\]
induced by $F$ is a  $k$-isomorphism.

\end{defi}

Observe that a covering is a faithful functor. Note also that Definition \ref{covering} coincides with
the one given
by K. Bongartz and P. Gabriel in \cite{boga}.

\begin{defi} \label{deficover}
Given $k$-categories $\B, \C, \D$, the set of morphisms $\mor(F,G)$ from a covering
$F:\C\rightarrow\B$ to a
covering $G:\D\rightarrow\B$ is the set of
pairs of $k$-linear functors $(H,J)$
where $H: \C  \to \D$, $J: \B \to \B$ are such that $J$ is an isomorphism, $J$ is the identity on objects and $GH=JF$. We also say that $H$ is a $J$-morphism.

We will consider within the group of automorphisms of a covering $F:\C \rightarrow \B$, the subgroup
$\aut_1 F$ of invertible endofunctors $G$ of $\C$ such that $FG=F$.
\end{defi}

For any covering $F$ it is known that $\aut_1 F$  acts freely on each fibre, see \cite{le2, CRS}.

A $k$-category $\B$ is \textbf{connected} if any two objects can be joined by a non-zero walk, see \cite[Section 2]{CRS2} for details.
\begin{defi}
A covering $F: \C\longrightarrow\B$ of $k$-categories is a
\textbf{Galois covering} if $\C$ is connected and $\aut_1 F$ acts
transitively on some fibre.
\end{defi}

As expected in similar Galois theories, the automorphism group acts
transitively at every fibre as soon as it acts transitively on a
particular one, see \cite{le2,CRS}.

Recall that a grading $Z$ of a $k$-category $\B$ by a group $\Gamma_Z$ is a direct sum decomposition of each $k$-module of
morphisms from $b$ to $c$
\[{}_{c} \B_b =\bigoplus_{s\in \Gamma_Z} Z^s{}_{c} \B_b\]
such that for $s,t  \in \Gamma_X$
\[Z^t{}_{d} \B_{c}\    Z^s{}_{c} \B_b \subset  Z^{ts}{}_{d} \B_b.\]

A morphism from $b$ to $c$ is called homogeneous of degree $s$  if it belongs to  $Z^s{}_{c}\B_b$.
A grading is connected if given any two objects they can be joined by a non-zero homogeneous walk of arbitrary degree.
For precise definitions see \cite[Section 2]{CRS2}.

\begin{defi}\cite{CM}
Let $\B$ be a $k$-category and let $Z$ be a grading of $\B$. The smash product category $\B\#Z$ has set of objects
$\B_0\times \Gamma_Z$,
the vector spaces of morphisms are homogeneous components as follows:
\[{}_{(c,t)}\!\left(\B\#Z\right)_{(b,s)}= Z^{t^{-1}s}\ {}_{c} \B_b.\]
\end{defi}

Note that for a connected grading $X$, the evident functor $F_X :  \B\#X\longrightarrow \B$ is a Galois covering.

Let $\B$ be a connected $k$-category with a fixed object $b_0$.
The category $\gal(\B,b_0)$ has as objects the Galois coverings of $\B$.
A morphism in $\gal(\B,b_0)$ from $F:\C\rightarrow\B$ to $G:\D\rightarrow\B$ is a morphism of coverings $(H,J)$,
see Definition \ref{deficover}.

Since any Galois covering $F$ of $\B$ is isomorphic to a smash product Galois covering by considering the natural grading
of $\B$ by $\aut_1F$, we consider as in  \cite{CRS2}  the full subcategory $\gal^\#(\B,b_0)$ whose objects are the smash
product Galois coverings provided by connected gradings of $\B$.
It can be proved that this full subcategory is equivalent to $\gal(\B,b_0)$, see  \cite{CRS2}.
The fibre functor $\Phi^\# : \gal^\#(\B,b_0) \rightarrow \mathsf{Groups}$ given by
\[\Phi^\#(F_X)=F_X^{-1}(b_0)=\Gamma_X\]
is the main ingredient for the definition of the fundamental group, namely
\[\Pi_1(\B,b_0)=\aut\Phi^\#.\]

Next we will consider in detail this group, and we will prove that an element of $\Pi_1(\B,b_0)$ is determined by a family of elements
belonging to the groups of connected gradings related through canonical surjective morphisms $\mu : \Gamma_X \to \Gamma_{X'}$ obtained as soon as $X$ and $X'$ are connected gradings admitting some morphism from $\B\# X$ to $\B\# X'$.

\begin{rem}\label{identifica} For any smash product Galois covering $F_X$ we identify the isomorphic groups
$\aut_1F_X$ and $\Gamma_X$ through left multiplication, that is, by the correspondence $s: F_X \to F_X$
with $s(x)= sx$ for any $s \in \Gamma_X$.
\end{rem}

\begin{pro}
Let $X$ and $X'$ be connected gradings of a $k$-category $\B$, and let $F_X$ and $F_{X'}$ be the corresponding smash product Galois
coverings with groups $\Gamma_X$ and $\Gamma_{X'}$.
Let $(H,J)$ be a morphism from $F_X$ to $F_{X'}$ in $\gal^\#(\B,b_0)$, where $H:\B\#X\longrightarrow \B\# X'$ is given on objects
by $H(b,s)=(b, H_b(s))$. Then there exists a unique surjective morphism of groups
$\lambda_H:\Gamma_X \rightarrow \Gamma_{X'}$ verifying
\[H_b(sx)=\lambda_H(s)H_b(x) \mbox{ for all } x\in\Gamma_X.\]
Moreover the complete list of $J$-morphisms from $F_X$ to $F_Y$ is given by $\left\{qH\right\}_{q\in\Gamma_Y}$,
and \[\lambda_{qH}=q\left(\lambda_H\right)q^{-1}.\]
\end{pro}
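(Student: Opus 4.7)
The plan is to exploit the unique lifting property for morphisms of Galois coverings: once two such morphisms coincide at a single object of a connected source, they must coincide everywhere. The entire proposition will then follow from analyzing $H$ at the single base object $(b_0,1)$.

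First I will observe that $H$ is star-bijective. This is immediate from $GH=JF$ together with the facts that $F$ and $G$ are coverings and $J$ is an automorphism of $\B$ that is the identity on objects: on each star, $H$ factors through the star isomorphism of $F$, the action of $J$ on $\st_b\B$, and the inverse of the star isomorphism of $G$. Using star-bijectivity at $(b_0,1)$ combined with the connectedness of $\B\#X'$ (which follows from $X'$ being a connected grading), I will conclude that $H$ is surjective on objects: any non-zero morphism out of $H(b_0,1)$ lifts through the star isomorphism of $H$, so its other endpoint lies in the image of $H$; iterating along non-zero walks exhausts $\B\#X'$.

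Next I will construct $\lambda_H$. Given $s\in\Gamma_X$, identified via Remark~\ref{identifica} with the left-multiplication automorphism of $\B\#X$, the composite $H\circ s$ is again a $J$-morphism because $G(Hs)=(GH)s=JF$. It sends $(b_0,1)$ to $(b_0,H_{b_0}(s))$, whereas for $\lambda\in\Gamma_{X'}$ the $J$-morphism $\lambda H$ sends $(b_0,1)$ to $(b_0,\lambda H_{b_0}(1))$. Setting $\lambda_H(s)=H_{b_0}(s)\,H_{b_0}(1)^{-1}$ aligns these two at the base object, so by unique lifting $Hs=\lambda_H(s)H$ on all of $\B\#X$. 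Evaluating on $(b,x)$ gives the desired identity $H_b(sx)=\lambda_H(s)H_b(x)$, and setting $x=1$ forces the formula just written for $\lambda_H(s)$, establishing uniqueness. The group morphism property $\lambda_H(st)=\lambda_H(s)\lambda_H(t)$ follows from applying the identity twice and using freeness of the $\Gamma_{X'}$-action on fibres, while surjectivity of $\lambda_H$ is precisely the object-surjectivity of $H$ already established.

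For the remaining assertions, any $J$-morphism $H'$ from $F_X$ to $F_{X'}$ sends $(b_0,1)$ into the fibre $\{b_0\}\times\Gamma_{X'}$, say to $(b_0,t)$; the element $q=tH_{b_0}(1)^{-1}\in\Gamma_{X'}$ then makes $qH$ and $H'$ agree at $(b_0,1)$, hence equal by unique lifting, so every $J$-morphism has the form $qH$. The conjugation formula follows from
\[(qH)(s\cdot c)=q\,\lambda_H(s)\,H(c)=\bigl(q\lambda_H(s)q^{-1}\bigr)\,(qH)(c),\]
together with the uniqueness of the group morphism attached to $qH$. The only genuinely delicate step in the whole program is the initial star-bijectivity observation, which underwrites both the unique lifting property and the object-surjectivity of $H$; once it is in hand, the rest is bookkeeping around a single base-point.
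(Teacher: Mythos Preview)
The paper does not actually prove this proposition here; it simply refers the reader to \cite[Section 2]{CRS2}. So there is no in-paper argument to compare against. Your proof is correct and follows the standard line for such statements: star-bijectivity of $H$ gives the unique lifting property over a connected source, from which everything else (the formula $\lambda_H(s)=H_{b_0}(s)H_{b_0}(1)^{-1}$, the group homomorphism property, surjectivity via object-surjectivity of $H$, the description of all $J$-morphisms as $qH$, and the conjugation identity) follows by evaluating at the base object $(b_0,1)$. One small remark: for unique lifting you use connectedness of the source $\B\#X$, while for object-surjectivity you use connectedness of the target $\B\#X'$; you invoke both, but it is worth making explicit that both hold because $X$ and $X'$ are each assumed connected.
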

For the proof see \cite[Section 2]{CRS2}.

\begin{defi}
In case of existence of a $J$-morphism $H: \B\# X \to \B\# X'$ we consider the \textbf{normalized} $J$-morphism $N=H_{b_0}(1)^{-1} H$. Observe that $N$ does not depend on $H$ since its value is the same when $H$ is replaced by $qH$. Moreover $N(b_0,1) = (b_0,1)$.

We set $\mu=\lambda_N$ and we call $\mu$ the \textbf{canonical group map} associated to the existence of a $J$-morphism from the smash product with $X$ to the one with $X'$. For $H: \B\# X \to \B\# X'$ a morphism we have $$\mu = H_{b_0}(1)^{-1} \lambda_H H_{b_0}(1).$$
\end{defi}
We are now able to describe the elements of the intrinsic fundamental group, namely the automorphisms of the fibre functor.

Recall that $\sigma\in\aut\Phi^\#$ is a family $\left\{\sigma_X:\Gamma_X\to\Gamma_X\right\}$ where $X$ is any connected grading
of $\B$ making commutative the following diagram for any morphism $H$ in $\gal^\#(\B,b_0)$:

\[ \xymatrix{
\Gamma_X  \ar[r]^{\sigma_X} \ar[d]_{H_{b_0}} &   \Gamma_X \ar[d]^{H_{b_0}} \\
\Gamma_{{X'}} \ar[r]^{\sigma_{{X'}}}  &   \Gamma_{{X'}}
} \]

\begin{lem}
The map $\sigma_X$ is the right product by an element $g_X\in\Gamma_X$.
\end{lem}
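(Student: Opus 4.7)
The plan is to exploit the naturality of $\sigma$ with respect to the automorphisms of the covering $F_X$ coming from its own Galois group. By Remark 2.7, each $t \in \Gamma_X$ may be identified with an element of $\aut_1 F_X$, namely the functor $\C \to \C$ sending $(b,s)$ to $(b,ts)$. Such a functor is a morphism in $\gal^\#(\B,b_0)$ from $F_X$ to itself (with $J$ equal to the identity), so the defining commutative square for $\sigma$ applies.

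More concretely, if I denote by $H_t$ the endofunctor of $\B\#X$ given by left multiplication by $t$ on the second coordinate, then $(H_t)_{b_0}: \Gamma_X \to \Gamma_X$ is the map $x \mapsto tx$. Naturality of $\sigma$ applied to $H_t$ (with $X' = X$) yields
\[ \sigma_X(tx) \;=\; (H_t)_{b_0}\bigl(\sigma_X(x)\bigr) \;=\; t\,\sigma_X(x) \]
for every $t,x \in \Gamma_X$. Specialising to $x=1$ and setting $g_X := \sigma_X(1)$, we obtain $\sigma_X(t) = t\,g_X$ for all $t$, which is exactly right multiplication by $g_X$.

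I do not foresee a genuine obstacle: the only point requiring care is verifying that the left-multiplication maps in $\aut_1 F_X$ really are morphisms in the category $\gal^\#(\B,b_0)$ on which $\sigma$ is a natural transformation, but this is immediate from the definitions of $\aut_1 F_X$ and of $\gal^\#(\B,b_0)$, together with the identification fixed in Remark 2.7.
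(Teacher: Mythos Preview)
Your proof is correct and follows essentially the same route as the paper: specialise the naturality square to $X'=X$ with the vertical maps given by elements of $\aut_1 F_X=\Gamma_X$ acting by left multiplication, deduce $\sigma_X(tx)=t\,\sigma_X(x)$, and set $g_X=\sigma_X(1)$.
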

\begin{proof}
In case $X={{X'}}$, the vertical arrows in the diagram can be specialized by any element in $\aut_1F_X=\Gamma_X$. By Remark
\ref{identifica}, this vertical morphisms are left product
 by some $g\in\Gamma_X$. We infer $\sigma_X(g)=\sigma_X(g1)=g\sigma_X(1)$ and we set $g_X=\sigma_X(1)$.\qed
\end{proof}

\begin{pro}\label{elements}
The automorphisms in $\Pi_1(\B,b_0)$ are in one to one correspondence with families of group elements  $\{g_X\}$ verifying $\mu(g_X)=g_{X'}$ for each canonical group map  $\mu$ corresponding to the existence of a morphism from $\B\#X$ to $B\#X'$.
\end{pro}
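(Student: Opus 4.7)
The plan is to unpack the definition of $\sigma \in \aut\Phi^\#$ using the preceding lemma, and then translate the commutativity of the naturality square into the algebraic condition $\mu(g_X)=g_{X'}$.

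First, given $\sigma \in \Pi_1(\B,b_0)$, the preceding lemma already tells us that each component $\sigma_X$ is right multiplication by $g_X := \sigma_X(1)$, so $\sigma$ determines a family $\{g_X\}$. The one-to-one correspondence will be established once we show that the naturality condition on $\sigma$ is equivalent to the compatibility $\mu(g_X)=g_{X'}$ for every canonical group map.

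Second, fix a morphism $(H,J): F_X \to F_{X'}$ in $\gal^\#(\B,b_0)$ and write the commutativity of the square $\sigma_{X'}\circ H_{b_0} = H_{b_0}\circ \sigma_X$ in terms of $g_X$ and $g_{X'}$: it reads $H_{b_0}(s)\,g_{X'} = H_{b_0}(sg_X)$ for every $s \in \Gamma_X$. The key reduction is the identity $H_b(s) = \lambda_H(s)\,H_b(1)$, obtained by specializing the formula $H_b(sx)=\lambda_H(s)H_b(x)$ from the preceding proposition at $x=1$. Applying it on both sides, the condition becomes
\[ \lambda_H(s)\,H_{b_0}(1)\,g_{X'} = \lambda_H(s)\,\lambda_H(g_X)\,H_{b_0}(1), \]
and cancelling $\lambda_H(s)$ yields $g_{X'} = H_{b_0}(1)^{-1}\lambda_H(g_X)\,H_{b_0}(1) = \mu(g_X)$, as required. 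Moreover the condition for $H$ and for $qH$ coincides, matching the fact (already noted in the definition of $\mu$) that the canonical group map depends only on the pair $(X,X')$ and not on the chosen $J$-morphism.

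Third, for the converse, take any family $\{g_X\}$ satisfying $\mu(g_X)=g_{X'}$ whenever a morphism $F_X \to F_{X'}$ exists, and define $\sigma_X: \Gamma_X \to \Gamma_X$ as right multiplication by $g_X$. Each $\sigma_X$ is automatically a bijection of the fibre $\Gamma_X$, and by reversing the computation above the diagram commutes for every morphism in $\gal^\#(\B,b_0)$, so $\sigma := \{\sigma_X\}$ lies in $\aut\Phi^\#$. The two constructions are mutually inverse since $\sigma_X(1)=g_X$ recovers the family.

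The main obstacle is the middle step, namely the bookkeeping around $\lambda_H$ versus $\mu$: one has to be careful that left multiplication (which enters through $\lambda_H$ and through the identification of Remark \ref{identifica}) interacts correctly with the right multiplication describing $\sigma_X$. Once the identity $H_b(s)=\lambda_H(s)H_b(1)$ is extracted, the algebra collapses cleanly and the conjugation by $H_{b_0}(1)$ implicit in the definition of $\mu$ absorbs the only remaining discrepancy.
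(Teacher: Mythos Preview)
Your proof is correct and follows essentially the same route as the paper's: both arguments reduce the naturality square to an algebraic identity by expressing $H_{b_0}$ in terms of a group homomorphism, then cancel to obtain $\mu(g_X)=g_{X'}$. The only cosmetic difference is that the paper rewrites $H_{b_0}(s)=H_{b_0}(1)\mu(s)$ and uses $\mu$ from the start, whereas you use $H_{b_0}(s)=\lambda_H(s)H_{b_0}(1)$ and convert to $\mu$ via the conjugation formula at the end; the underlying computation is the same.
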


\begin{proof}
Let $H$ be a $J$-morphism from $F_X$ to $F_X'$ where $X$ and $X'$ are connected gradings of $\B$, and let
$\mu:\Gamma_X\to\Gamma_{X'}$ be
the corresponding canonical surjective group map. The previous diagram becomes

\[ \xymatrix{
\Gamma_X  \ar[r]^{.g_X} \ar[d]_{H_{b_0}(1)\mu} &   \Gamma_X \ar[d]^{H_{b_0}(1)\mu} \\
\Gamma_{Y'} \ar[r]^{.g_{Y'}}  &   \Gamma_{Y'}
} \]
Hence \[H_{b_0}(1)\mu(gg_X)=H_{b_0}(1)\mu(g)g_{Y'}\] for any $g$. Since $\mu$ is a group homomorphism we infer $\mu(g_X)=g_{Y'}$.
Reciprocally a family of elements with the stated property clearly defines an automorphism of the fibre functor.\qed
\end{proof}

We say that a family of group elements is \textbf{compatible} if it satisfies the condition in the proposition above.

\section{\sf Functoriality and incompressible subcategories}

An important tool for proving the functoriality of the intrinsic fundamental group is the restriction of a grading to full
subcategories that we will consider below.

Let $Z$ be a non necessarily connected grading of a connected $k$-category $\B$. Let ${}_{b_2}\!(\Gamma_Z)_{b_1}$ be the set of
\textbf{walk's degrees} from $b_1$ to $b_2$, that is, the set of elements in $\Gamma_Z$ which are degrees of homogeneous
non-zero walks from $b_1$ to $b_2$. Note that if $b_1=b_2$ this set is a subgroup of $\Gamma_Z$ which we denote $\Gamma_{Z,b_1}$.
Let $s$ be any walk's degree from $b_1$ to $b_2$. Then
\[{}_{b_2}\!(\Gamma_Z)_{b_1}=s\left[\Gamma_{Z,b_1}\right]=\left[\Gamma_{Z,b_2}\right]s.\]

Recall that by definition, a grading $X$ is \textbf{connected} if and only if for any objects $b_1, b_2 \in \B_0$
\[{}_{b_2}\!(\Gamma_X)_{b_1}=\Gamma_X.\]
This is equivalent to ${}_{b_2}\!(\Gamma_X)_{b_1}=\Gamma_X$ for some pair of objects, and in particular to $\Gamma_{X,b_0} = \Gamma_X$ for a given object $b_0$, see \cite{CRS2}.

Let $\B$ be a $k$-category and let $X$ be a connected grading of $\B$ by the group $\Gamma_X$. Let $\D$ be a connected full
subcategory of $\B$. The \textbf{restricted grading} by the same group $\Gamma_X$ is denoted $X\!\downarrow_\D$.
Note that since $\D$ is full, each $k$-module of morphisms in $\D$ has the same direct sum decomposition from the original
grading. Clearly the grading $X\!\downarrow_\D$ is not connected in general.

In order to consider the corresponding connected grading out of a non connected one we will provide some preliminary results. The description of a connected component will make use of conjugated gradings as defined in \cite{CRS2}. More precisely, let $Z$ be a non necessarily connected grading of a connected $k$-category $\B$ and let $(a_b)_{b\in \B_0}$ be a family of elements in $\Gamma_Z$.  The \textbf{conjugated grading} ${}^a\! Z$ of $\B$ by the same group $\Gamma_Z$ is as follows:
we set the ${}^a\! Z$-degree of a non-zero homogeneous morphism from $b$ to $c$ of $Z$-degree $t$ to be $a_c^{-1}t a_b$, that is,
\[({}^a\! Z)^s {}_c\B_b = Z^{a_c s a_b^{-1}}  {}_c\B_b.\]
Note that the underlying homogeneous components remain unchanged, and there is no difficulty for proving that ${}^a\! Z$ is indeed a grading. We observe that ${}^a\! Z$ is connected if $Z$ is so.

\begin{defi}
Let $\B$ be a connected $k$-category with a non necessarily connected grading $Z$ with group $\Gamma_Z$. Let $b_0$ be a fixed
object. The connected component of $\B\# Z$ containing the object $(b_0,1)$ is denoted $(\B\# X)^0$ and is called the \textbf{connected component of the base object}.
\end{defi}

\begin{pro}\label{outconnected}
The connected component of the base object $(\B\# Z)^0$ is the smash product of $\B$ by a conjugated grading of $Z$. The group of this Galois covering (or of the corresponding connected grading) is $\Gamma_{Z,b_0}$.
\end{pro}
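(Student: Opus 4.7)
The plan has three stages: first characterize which objects of $\B\#Z$ lie in the connected component $(\B\#Z)^0$, then manufacture a suitable conjugated grading $W$ out of $Z$, and finally exhibit an explicit isomorphism $\B\#W\cong(\B\#Z)^0$.

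The key claim of the first stage is that $(b,s)\in(\B\#Z)^0$ if and only if $s\in{}_{b_0}\!(\Gamma_Z)_b$. To prove this I will analyze an arbitrary non-zero walk in $\B\#Z$ of the form $(b_0,1)=(x_0,t_0),(x_1,t_1),\ldots,(x_n,t_n)=(b,s)$: whether the $i$-th step is forward or backward, its projection to $\B$ is a non-zero homogeneous morphism between $x_{i-1}$ and $x_i$ whose contribution, on the left, to the walk's degree in $\B$ from $b_0$ to $b$ equals $t_i^{-1}t_{i-1}$. The product telescopes to $t_n^{-1}t_0=s^{-1}$, giving the forward direction. The converse requires lifting a non-zero walk in $\B$ from $b$ to $b_0$ of $Z$-degree $s$ one edge at a time, starting from $(b,s)$: a forward edge $y\to y'$ of degree $d$ lifts to $(y,u)\to(y',ud^{-1})$ in $\B\#Z$ and a backward edge lifts symmetrically; an analogous telescoping argument shows that the lift terminates at $(b_0,1)$.

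For the second stage I will use the connectedness of $\B$ to choose, for each $b\in\B_0$, an element $a_b\in{}_b\!(\Gamma_Z)_{b_0}$ with $a_{b_0}=1$, and set $W={}^a\!Z$, so that $W^s{}_c\B_b=Z^{a_csa_b^{-1}}{}_c\B_b$. Routing walks $b\to b_0\to c$ gives ${}_c\!(\Gamma_Z)_b=a_ca_b^{-1}\Gamma_{Z,b}$; a non-zero component $W^s{}_c\B_b$ then forces $s\in a_b^{-1}\Gamma_{Z,b}a_b$, which equals $\Gamma_{Z,b_0}$ since $a_b^{-1}\gamma a_b$ is a loop at $b_0$ whenever $\gamma$ is a loop at $b$. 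Hence $W$ is really a grading of $\B$ by the subgroup $\Gamma_{Z,b_0}$ of $\Gamma_Z$. A parallel computation shows that the $W$-degree of a walk from $b_0$ to $b$ of $Z$-degree $d$ is $a_b^{-1}d$, so $W$-walk degrees from $b_0$ to $b$ exhaust $a_b^{-1}\cdot a_b\Gamma_{Z,b_0}=\Gamma_{Z,b_0}$, which gives the connectedness of $W$.

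Finally I will define $\Phi:\B\#W\to(\B\#Z)^0$ by $\Phi(b,\gamma)=(b,\gamma a_b^{-1})$ on objects, and by the identity on each underlying morphism space of $\B$. This is well defined because
\[W^{\gamma'^{-1}\gamma}{}_c\B_b=Z^{a_c\gamma'^{-1}\gamma a_b^{-1}}{}_c\B_b=Z^{(\gamma' a_c^{-1})^{-1}(\gamma a_b^{-1})}{}_c\B_b\]
coincides with the morphism space from $(b,\gamma a_b^{-1})$ to $(c,\gamma' a_c^{-1})$ in $\B\#Z$. The first stage ensures bijectivity on objects, and preservation of composition and identities is automatic, so $\Phi$ will be an isomorphism of $k$-categories identifying the covering $(\B\#Z)^0\to\B$ with $F_W:\B\#W\to\B$; this latter is a Galois covering with group $\Gamma_{Z,b_0}$ because $W$ is a connected $\Gamma_{Z,b_0}$-grading, yielding the asserted group. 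The main obstacle will be the bookkeeping in the first stage, and in particular pinning down that projecting a walk in $\B\#Z$ from $(b_0,1)$ to $(b,s)$ produces a walk in $\B$ of $Z$-degree $s^{-1}$, not $s$; this inverse is exactly what dictates the convention $a_b\in{}_b\!(\Gamma_Z)_{b_0}$ used in the conjugation formula, without which the conjugated grading would land in a conjugate subgroup rather than in $\Gamma_{Z,b_0}$ itself.
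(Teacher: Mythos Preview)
Your argument is correct and follows the same underlying idea as the paper: pick a walk's degree $a_b$ (the paper's $u_b$) from $b_0$ to each object, form the conjugated grading, and identify the connected component of $(b_0,1)$ with the smash product over that conjugated grading with group $\Gamma_{Z,b_0}$. The difference is one of presentation rather than strategy. The paper first argues that $(\B\#Z)^0\to\B$ is a Galois covering and then invokes the general machinery of \cite{CM} (a Galois covering, together with a choice of object in each fibre, yields a connected grading whose smash product recovers the covering), merely asserting that the resulting grading is ${}^uZ$; you instead bypass that black box by (i) characterizing the objects of $(\B\#Z)^0$ explicitly via the telescoping degree computation, (ii) checking directly that ${}^aZ$ is a connected $\Gamma_{Z,b_0}$-grading, and (iii) writing down the isomorphism $\Phi(b,\gamma)=(b,\gamma a_b^{-1})$ by hand. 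Your route is more self-contained and makes the bookkeeping (in particular why the group is exactly $\Gamma_{Z,b_0}$ rather than a conjugate) fully transparent, at the cost of a little more explicit computation; the paper's route is shorter but leans on the reader knowing \cite{CM}.
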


\begin{proof}
Firstly we assert that the restriction of $F_Z : \B\# Z \to \B$ to the connected component of the base object is still surjective on objects. Indeed  $\B$ is connected, then for any object $b\in\B_0$ there exists a walk $w$ from $b$ to $b_0$. For each morphism appearing in $w$ we choose an homogeneous non-zero component, obtaining in this way an homogeneous walk $w'$ of some degree $s$ from $b$ to $b_0$. Lifting appropriately the homogeneous morphisms of $w'$ provides a walk in the connected component of the base object from some $(b,s)$ to $(b_0,1)$. Thus $(b,s)$ is an object of the connected component of the base object lying in the fibre of $b$ of the restriction of $F_Z$. Consequently this restriction is a Galois covering.

Secondly we know  that each Galois covering  provides a connected grading  of the base category (with group the automorphism group of the covering) once a family of objects is chosen in each fibre (see \cite{CM}). Let $(b,u_b)$ be such a choice for the restriction of $F_Z$, with $u_{b_0}=1$. Since $(\B\# X)^0$ is connected, there is a walk from $(b_0,1)$ to $(b,u_b)$ providing through $F_Z$ an homogeneous walk in $\B$ from $b$ to $b_0$ of $Z$-degree $u_b$. We set $u=(u_b)_{b\in\B_0}$. The definition of the grading (see \cite{CM}) shows that the grading coincides with ${}^uX$.

Finally in order to show that the group of the grading (namely the automorphism group of the Galois covering) is $\Gamma_{Z,b_0}$ consider $(b_0, s)$ an object in the fibre of $b_0$ and a walk from $(b_0,s)$ to $(b_0, 1)$. Its image using $F_Z$ provides an homogeneous closed walk at $b_0$ in $\B$ which degree is precisely $s$ according to the construction of the grading. Conversely let $w$ be an homogeneous closed walk at $b_0$ in $\B$. By appropriate lifting, $w$ provides a walk in $\B\# Z$ between $(b_0,1)$ and some $(b_0, s)$, where the degree of $w$ is $s$.\qed

\end{proof}

\begin{rem}\label{choices}
The connected grading obtained depends on the choice $(u_b)_{b\in\B_0}$, where
$u_b\in{}_{b}(\Gamma_Z)_{b_0}$. Any other choice $(u'_b)_{b\in\B_0}$ with $u'_{b_0}=1$ is obtained as
$(u_ba_b)_{b\in\B_0}$ where $a_b\in\Gamma_{Z,b_0}$ and $a_{b_0}=1$. The group of the connected gradings remains the same.
\end{rem}

\begin{thm}
Let $\B$ be a connected $k$-category with a fixed object $b_0$ and let $\D$ be a connected full subcategory containing $b_0$.
Then there is a canonical group morphism
\[\kappa : \Pi_1(\D, b_0)\longrightarrow \Pi_1(\B,b_0)\]

In this way $\Pi_1$ becomes a functor from the category of small $k$-categories with a chosen base object with morphisms the fully faithful functors which are injective on objects and  preserve base objects to the category of groups.
\end{thm}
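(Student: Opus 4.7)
The plan is to use the description of elements of $\Pi_1$ given by Proposition~\ref{elements}: an element of $\Pi_1(\D,b_0)$, respectively of $\Pi_1(\B,b_0)$, is a compatible family $\{h_Y\}$, respectively $\{g_X\}$, of group elements indexed by connected gradings of $\D$, respectively of $\B$. I will define $\kappa$ componentwise by associating, to each connected grading $X$ of $\B$, a canonical connected grading $Y_X$ of $\D$ as follows: first restrict $X$ to the full subcategory $\D$ to obtain the possibly non-connected grading $X\!\downarrow_\D$; then apply Proposition~\ref{outconnected} to the connected component of the base object in $\D\#(X\!\downarrow_\D)$. This yields a connected grading $Y_X$ of $\D$ whose group is $\Gamma_{X\!\downarrow_\D,b_0}$, which embeds canonically as a subgroup of $\Gamma_X$. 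Given a compatible family $\{h_Y\}$, I set $g_X = h_{Y_X}$ regarded as an element of $\Gamma_X$ via this inclusion.

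The first verification is that $g_X$ is well-defined. The connected grading $Y_X$ depends on a choice of representatives $(u_b)_{b\in\D_0}$, but Remark~\ref{choices} guarantees that any two choices give connected gradings sharing the group $\Gamma_{X\!\downarrow_\D,b_0}$ and produce canonically isomorphic objects of $\gal^\#(\D,b_0)$. The compatibility of $\{h_Y\}$ applied to this isomorphism forces $h_{Y_X}$ to be independent of the choice.

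The central step is to check that the resulting family $\{g_X\}$ is compatible in the sense of Proposition~\ref{elements}. Let $H:\B\#X \to \B\#X'$ be a $J$-morphism in $\gal^\#(\B,b_0)$ with associated canonical surjection $\mu:\Gamma_X\to\Gamma_{X'}$, normalized so that $H(b_0,1)=(b_0,1)$. Since $\D$ is full in $\B$, the subcategory $\D\#(X\!\downarrow_\D)$ is full in $\B\#X$ (and similarly for $X'$); because $H$ acts as the identity on the first coordinate of objects, it restricts to a functor $\widetilde H:\D\#(X\!\downarrow_\D)\to\D\#(X'\!\downarrow_\D)$. As $\widetilde H(b_0,1)=(b_0,1)$, it maps the connected component of the base object into the connected component of the base object, inducing a $J$-morphism $\D\#Y_X\to\D\#Y_{X'}$ in $\gal^\#(\D,b_0)$. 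Its canonical group map $\mu'$ is the restriction of $\mu$ to $\Gamma_{Y_X}\subset\Gamma_X$, so the compatibility of $\{h_Y\}$ yields $\mu'(h_{Y_X})=h_{Y_{X'}}$, that is, $\mu(g_X)=g_{X'}$.

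Finally, $\kappa$ is a group homomorphism because the group operation in $\Pi_1$ is componentwise multiplication of compatible families, and functoriality for a chain $\D'\subset\D\subset\B$ follows from the transitivity of restriction of gradings together with the fact that iterating the connected-component-of-base construction produces the same connected grading of $\D'$, up to the inessential choices already handled. I expect the main obstacle to be the tracking of choices in Proposition~\ref{outconnected}: in particular, confirming that after normalization $H$ genuinely descends past the connected-component construction on both sides and that $\mu$ restricts to the canonical group map $\mu'$ on the $\D$-side.
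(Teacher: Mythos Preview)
Your approach is essentially the same as the paper's: restrict $X$ to $\D$, pass to the connected component of the base object via Proposition~\ref{outconnected}, and set $\kappa(\sigma)_X$ equal to the component of $\sigma$ at the resulting connected grading of $\D$; then check compatibility by restricting a morphism $(H,J)$ first to $\D\#(X\!\downarrow_\D)$ and then to connected components.

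The one place where the paper is more explicit than your sketch is the well-definedness step. You say that two choices $u,u'$ give ``canonically isomorphic objects'' and that compatibility of $\{h_Y\}$ then forces $h_{Y_X}$ to be independent of the choice. But compatibility only gives $\mu(h_{{}^u(X\downarrow_\D)})=h_{{}^{u'}(X\downarrow_\D)}$ for the canonical group map $\mu$ attached to that isomorphism; to conclude equality you must know $\mu=\mathrm{id}$. The paper isolates this as a separate lemma: for conjugated gradings there is an explicit covering morphism $\B\#X\to\B\#{}^aX$, $(b,s)\mapsto(b,sa_b)$, whose canonical group map is conjugation by $a_{b_0}$; since Remark~\ref{choices} ensures $a_{b_0}=1$, one gets $\mu=\mathrm{id}$. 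This is the missing line in your argument, and it is also what makes precise your closing remark that ``$\mu$ restricts to the canonical group map $\mu'$ on the $\D$-side'' in the compatibility check.
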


\begin{proof}
According to Proposition \ref{elements} let $\sigma\in\Pi_1(\D, b_0)$ be determined by  a compatible family $\{g_Y\}$ where $Y$ varies over all the connected gradings of $\D$ and where $g_Y$ is in $\Gamma_Y$. Recall that $\mu(g_Y)=g_{Y'}$ for each pair of connected gradings $Y$ and $Y'$ such that there is a morphism from $\D\#Y$ to $\D\#Y'$.
In order to define $\left(\kappa(\sigma)\right)_X$ for a given connected grading $X$ of $\B$ we consider the connected component of the base object $\left(\D\#X\!\!\downarrow_D\right)^0$. According to the previous proposition this  Galois covering is given by a smash product with respect to a connected grading ${}^u\!\left(X\!\downarrow_D\right)$  with group $\Gamma_{X\downarrow_D,b_0}$. We define
\[\kappa(\sigma)_X=g_{\ {}^u\!\left(X\!\downarrow_D\right)}.\]

In order to check that $\kappa$ is  well defined we have to verify that for any set $u'=(u'_b)$ of degrees of homogeneous walks in $\D$ from $b$ to $b_0$ with $u'_{b_0}=1$ we have   $$g_{\ {}^{u}\left(X\!\downarrow_\D\right)}=g_{\ {}^{u'}\left(X\!\downarrow_\D\right)}.$$
This will be insured by the following Lemma, which shows that there is a morphism between the corresponding Galois smash
coverings whose corresponding canonical group map $\mu$ is the identity.

We prove now that the obtained family is compatible. Let $X$ and $X'$ be two connected gradings of $\B$,
let $(H,J)$ be a morphism of coverings $\B\#X\to\B\#X'$ and let $\mu :\Gamma_X\to\Gamma_{X'}$ be the corresponding canonical group map.

Since $J$ is the identity on objects it restricts to an isomorphism of $D$. Hence $(H,J)$ restricts to a morphism from the full subcategory $\left(\D\#X\!\!\downarrow_D\right)$ to the full subcategory $\left(\D\#X'\!\!\downarrow_D\right)$.

Note that a morphism between non necessarily connected coverings is faithful, consequently it preserves connected components. Hence the preceding restriction gives a morphism of Galois coverings $$\left(\D\#X\!\!\downarrow_D\right)^0 \to \left(\D\#X'\!\!\downarrow_D\right)^0.$$
The canonical group map arising from this morphism is the restriction of the canonical $\mu$ associated to $(H,J)$. This shows that the family defined by $\kappa (g)$ is compatible.

As a consequence we note that the image of the restriction of $\mu$ to  $\Gamma_{X\downarrow_D,b_0}$ is in  $\Gamma_{X'\downarrow_D,b_0}$, a fact which can also be obtained easily directly.

Finally note that $\kappa$ clearly preserves composition of inclusions of full subcategories. \qed
\end{proof}

\begin{rem}
The connected gradings ${}^{u}\!\left(X\!\downarrow_\D\right)$ and ${}^{u'}\!\left(X\!\downarrow_\D\right)$ are conjugated gradings by the family $(a_b)_{b \in \B_0}$  with $a_{b_0}=1$.
\end{rem}

\begin{lem}
Let $X$ be a connected grading of $\B$ and let $(a_b)_{b \in \B_0}$ be a family of elements in $\Gamma_X$.
There is a covering morphism $\B\#X\to\B\# {}^a\!X$ between conjugated gradings. The corresponding induced canonical
group morphism $\mu:\Gamma_X \to\Gamma_X$ is conjugation by $a_{b_0}$. In particular if $a_{b_0}=1$ then $\mu=1$.
\end{lem}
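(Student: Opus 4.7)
The approach is to construct the covering morphism $H\colon \B\#X \to \B\#{}^aX$ explicitly and then read off $\mu$ from the formulas already established.

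On objects I set $H(b,s) = (b, sa_b)$, and on morphisms $H$ acts as the identity on the underlying $\B$-hom-modules. To verify this is well-defined, observe that a morphism from $(b,s)$ to $(c,t)$ in $\B\#X$ lies in $X^{t^{-1}s}\,{}_c\B_b$, while the corresponding morphism space in $\B\#{}^aX$ from $H(b,s)=(b,sa_b)$ to $H(c,t)=(c,ta_c)$ is
\[
({}^aX)^{(ta_c)^{-1}(sa_b)}\,{}_c\B_b = X^{a_c\left(a_c^{-1}t^{-1}sa_b\right)a_b^{-1}}\,{}_c\B_b = X^{t^{-1}s}\,{}_c\B_b,
\]
the same underlying component. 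Functoriality is immediate since $H$ is the identity on $\B$-morphisms, and $F_{{}^aX}\circ H = F_X$ holds on the nose, so $(H, 1_\B)$ is a morphism of coverings in $\gal^\#(\B,b_0)$; bijectivity on objects (inverse $(b,s)\mapsto (b, sa_b^{-1})$) together with the identity on the $\B$-side of stars shows that $H$ is itself a covering.

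For the canonical map I read off $\lambda_H$ from its defining relation. Since $H_b(x) = xa_b$, we have $H_b(sx) = sxa_b = sH_b(x)$, forcing $\lambda_H = \mathrm{id}_{\Gamma_X}$ by uniqueness in the previous proposition. The formula $\mu = H_{b_0}(1)^{-1}\lambda_H H_{b_0}(1)$ then yields $\mu(s) = a_{b_0}^{-1} s a_{b_0}$, which is conjugation by $a_{b_0}$ and collapses to the identity when $a_{b_0}=1$.

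I do not foresee a real obstacle; the only subtle point is choosing the correct convention on the second coordinate. Only the rule $(b,s)\mapsto(b,sa_b)$ makes the degree identity above work out, whereas apparently symmetric choices such as $(b, a_bs)$ fail unless the elements $a_b$ are central in $\Gamma_X$. Once the functor is in place, everything else is a direct calculation from the definitions of $\lambda_H$ and $\mu$ recalled just before the statement.
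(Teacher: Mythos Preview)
Your proof is correct and follows essentially the same route as the paper: the same functor $H(b,s)=(b,sa_b)$, the same verification that the hom-spaces match, and the same extraction of $\mu$ from $H_{b_0}(1)=a_{b_0}$. The only cosmetic difference is that the paper normalizes $H$ first and reads off $\mu=\lambda_N$ directly, whereas you compute $\lambda_H=\mathrm{id}$ and then apply the formula $\mu=H_{b_0}(1)^{-1}\lambda_H H_{b_0}(1)$; the two computations are equivalent.
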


\begin{proof}
The functor $H$ is given on objects by $H(b,s)=(b,sa_b)$ while on morphisms the functor is the identity since
\[{}_{(c,ta_c)}(\B\#{}^a\!X)_{(b,sa_b)}=({}^a\!X)^{a_c^{-1}t^{-1}sa_b} {}_c\B_b = X^{t^{-1}s}{}_b\B_c = {}_{(c,t)}\left(\B\#X\right)_{(b,s)}.\]

In order to compute $\mu$ we first normalize $H$ by considering $N=H_{b_0}(1)^{-1} \ H$. Since $H_{b_0}(1)=a_{b_0}$ we infer $N(s)=a_{b_0}^{-1}sa_{b_0}.$\qed
\end{proof}

We end this section with a general criterion for $\kappa$ being injective, and we give a family of cases where the
criterion applies.

\begin{cor}
Let $\B$ be a connected $k$-category, let $b_0$ be a fixed object and let $\D$ be a connected full subcategory containing $b_0$.
Assume any connected grading of $\D$ is of the form ${}^u\left(X\!\downarrow_\D\right)$ for some connected grading $X$ of $\B$.
Then the group morphism
\[\kappa : \Pi_1(\D, b_0)\longrightarrow \Pi_1(\B,b_0)\]
is injective.
\end{cor}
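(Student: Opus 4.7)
The plan is to use the concrete description of $\Pi_1$ provided by Proposition \ref{elements}, which identifies elements of $\Pi_1(\D,b_0)$ (resp. of $\Pi_1(\B,b_0)$) with compatible families of group elements indexed by the connected gradings of $\D$ (resp. of $\B$). Since $\kappa$ is a group homomorphism, it is enough to show that its kernel is trivial.

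Let $\sigma\in\ker\kappa$ be represented by a compatible family $\{g_Y\}$, with $Y$ ranging over the connected gradings of $\D$. The construction of $\kappa$ in the proof of the theorem yields, for every connected grading $X$ of $\B$,
$$\kappa(\sigma)_X \;=\; g_{\,{}^u(X\!\downarrow_\D)},$$
where $u=(u_b)$ is any family of degrees of homogeneous walks in $\D$ from $b$ to $b_0$ with $u_{b_0}=1$. The independence of this expression on the choice of $u$ was exactly the point of the preceding lemma: the canonical group map between the relevant conjugated gradings is the identity, so compatibility of $\{g_Y\}$ forces $g_{\,{}^u(X\!\downarrow_\D)}$ to be the same element of $\Gamma_{X\!\downarrow_\D,b_0}$ for every admissible $u$. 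The hypothesis $\kappa(\sigma)=1$ then gives $g_{\,{}^u(X\!\downarrow_\D)}=1$ for every connected grading $X$ of $\B$.

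Now I invoke the standing assumption: every connected grading $Y$ of $\D$ is of the form ${}^u(X\!\downarrow_\D)$ for some connected grading $X$ of $\B$. Combined with the previous step this forces $g_Y=1$ for every connected grading $Y$ of $\D$, and Proposition \ref{elements} then identifies $\sigma$ with the neutral element of $\Pi_1(\D,b_0)$, proving injectivity. In this sense there is no genuine technical obstacle: the entire content of the statement is packed into the surjectivity hypothesis, which converts the a priori delicate question of injectivity of $\kappa$ into a purely combinatorial assertion about extending connected gradings from $\D$ to $\B$.
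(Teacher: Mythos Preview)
Your proof is correct and follows essentially the same approach as the paper: both argue that $\kappa(\sigma)=1$ forces $g_{\,{}^u(X\!\downarrow_\D)}=1$ for all connected gradings $X$ of $\B$, and then invoke the hypothesis to conclude $g_Y=1$ for every connected grading $Y$ of $\D$. The additional remarks you include about independence on the choice of $u$ are accurate but already absorbed into the construction of $\kappa$ in the preceding theorem.
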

\begin{proof}
Let $\sigma=(g_Y)$ be a compatible family defining an element in $ \Pi_1(\D, b_0)$. Assume $\kappa(\sigma)=1$ which
means that for any connected grading $X$ of $\B$ we have $\kappa(\sigma)_X=1$. Recall that
$\kappa(\sigma)_X= g_{\left[{}^u\left(X\!\downarrow_\D\right)\right]}$. Consequently those elements are trivial.
By hypothesis any connected grading $Y$ of $\D$ is of this form, then $\sigma=1$.\qed
\end{proof}

\begin{defi}
A subcategory $\D$ of $\B$ is said to be \textbf{convex} if any morphism of $\D$ only factors through morphisms in $\D$. In case $\D$ is full, this condition is equivalent to the fact that any composition of an outcoming morphism (with source in $\D$ and target not in $\D$) and an incoming one (reverse conditions) must be zero.
\end{defi}

\begin{cor}
Let $\B$ be a connected $k$-category, let $b_0$ be a fixed object and let $\D$ be a connected full convex subcategory containing $b_0$. Then $\kappa$ is injective.
\end{cor}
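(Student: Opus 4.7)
The plan is to invoke the preceding Corollary: since $\kappa$ is injective as soon as every connected grading of $\D$ arises as ${}^u(X\!\downarrow_\D)$ for some connected grading $X$ of $\B$, the task reduces to verifying this extension property under the convexity hypothesis. I would carry this out by directly extending each connected grading $Y$ of $\D$ to a grading of $\B$.

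Given such a $Y$ with group $\Gamma_Y$, I would construct $X$ on $\B$ using a possibly larger grading group $\Gamma_X$ containing $\Gamma_Y$. The construction assigns to each $\D$-morphism its $Y$-degree, while morphisms of $\B$ involving an object outside $\D$ receive new generators of $\Gamma_X$, subject only to the relations forced by the compositions of $\B$. After checking that $X$ is well-defined, I would replace it by its connected component at $b_0$ via Proposition~\ref{outconnected}, obtaining a connected grading of $\B$ by the subgroup $\Gamma_{X,b_0}$, which still contains $\Gamma_Y$.

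The convexity hypothesis enters essentially here: any composition in $\B$ of an outcoming morphism followed by an incoming one lies in $\D$ and vanishes, so the corresponding $\B$-relations reduce to the trivial equation $0=0$ in $\Gamma_X$, and no such ``round-trip'' relation imposes an identification among $\Gamma_Y$-elements. This guarantees that $\Gamma_Y$ embeds faithfully in $\Gamma_X$; consequently $X\!\downarrow_\D$ carries the $Y$-grading through this embedding, and since $Y$ is already connected one has $\Gamma_{X\!\downarrow_\D,b_0}=\Gamma_Y$. Choosing the family $u$ appropriately then yields ${}^u(X\!\downarrow_\D)=Y$ exactly, concluding the verification of the criterion.

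The main obstacle is the careful construction of $\Gamma_X$ and the precise verification, via convexity, that $\Gamma_Y$ is not collapsed by the extension. This is where the hypothesis on $\D$ is used in an essential way: without convexity, relations of $\B$ travelling out of $\D$ and back in could force non-trivial identifications inside $\Gamma_Y$, at which point the embedding $\Gamma_Y \hookrightarrow \Gamma_X$ would fail and the extension strategy would break down.
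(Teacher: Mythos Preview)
Your overall plan --- invoke the preceding Corollary by extending each connected grading $Y$ of $\D$ to a connected grading of $\B$ --- is exactly the paper's strategy. However, your extension step is far more elaborate than necessary, and as written it is not well-defined.

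The paper's extension is trivial: keep the \emph{same} group $\Gamma_Y$, and declare every morphism of $\B$ with source or target outside $\D$ to be homogeneous of degree $1$. The only thing to check is compatibility with composition. The interesting case is a composite $gf$ with $f$ going out of $\D$ and $g$ coming back in; such a composite lies in $\D$ and hence already carries a $Y$-degree, while the product of the assigned degrees of $g$ and $f$ is $1$. Convexity says precisely that $gf=0$, so there is no conflict. Connectedness is immediate because $Y$ was already connected on $\D$, so every element of $\Gamma_Y$ is a walk's degree at $b_0$. Thus $X\!\downarrow_\D = Y$ on the nose (with $u$ trivial), and the criterion applies.

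Your proposed construction --- enlarging the group by ``new generators'' for morphisms outside $\D$, subject to the relations forced by composition --- does not make sense as stated in a $k$-linear category. A grading is a direct sum decomposition of each morphism \emph{module}, not an assignment of degrees to individual morphisms, and there is no canonical generating set to which one could attach free group letters. Even granting some version of a universal-grading construction, you would then need to argue that $\Gamma_Y$ injects into it, pass to the connected component, and identify ${}^u(X\!\downarrow_\D)$ with $Y$; each of these steps is real work that the trivial-degree extension avoids entirely. The convexity hypothesis is doing exactly the same job in both approaches --- killing the ``out-and-back'' compositions --- but the paper exploits it in one line rather than burying it inside a delicate group presentation.
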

\begin{proof}
Let $Y$ be a connected grading of $\D$. We extend $Y$ to $\B$ by providing trivial degree to any morphism whose source or target is not in $\D$.  By hypothesis there is no non-zero morphism of the form $gf$ where $f$ has source in $\D$,  $g$ has target in $\D$, and the source of $g$ and the target of $f$ coincide without being in $\D$. We infer that this setting indeed provides a grading. The grading is connected since any element of the group is a walk's degree, already in $\D$. The preceding result insures that $\kappa$ is injective.  \qed
\end{proof}


\footnotesize
\noindent C.C.: Universit\'e Montpellier 2,
\\Institut de math\'{e}matiques et de mod\'{e}lisation de Montpellier I3M,\\
UMR 5149\\
Universit\'{e}  Montpellier 2,
\\F-34095 Montpellier cedex 5,\\
{\tt Claude.Cibils@math.univ-montp2.fr}

\medskip

\noindent M.J.R.: Universidad Nacional del Sur,
\\Departamento de Matem\'atica,
Universidad Nacional del Sur,\\Av. Alem 1253\\8000 Bah\'\i a Blanca,
Argentina.\\ {\tt mredondo@criba.edu.ar}

\medskip

\noindent A.S.:
\\Departamento de Matem\'atica,
 Facultad de Ciencias Exactas y Naturales,\\
 Instituto de Matem\'atica Luis Santal\'o, IMAS-CONICET\\
 Universidad de Buenos Aires,
\\Ciudad Universitaria, Pabell\'on 1\\
1428, Buenos Aires, Argentina. \\{\tt asolotar@dm.uba.ar}

\end{document}